\documentclass[a4paper]{article}

\usepackage{amsmath,amsthm}

\usepackage{newtxtext}
\usepackage[subscriptcorrection]{newtxmath}
\usepackage{color}

\usepackage{natbib}
\usepackage{geometry}

\usepackage[plain,noend]{algorithm2e}

\makeatletter
\renewcommand{\algocf@captiontext}[2]{#1\algocf@typo. \AlCapFnt{}#2} 
\def\@algocf@capt@plain{top}
\renewcommand{\algocf@makecaption}[2]{
  \addtolength{\hsize}{\algomargin}%
  \sbox\@tempboxa{\algocf@captiontext{#1}{#2}}%
  \ifdim\wd\@tempboxa >\hsize
    \hskip .5\algomargin%
    \parbox[t]{\hsize}{\algocf@captiontext{#1}{#2}}
  \else%
    \global\@minipagefalse%
    \hbox to\hsize{\box\@tempboxa}
  \fi%
  \addtolength{\hsize}{-\algomargin}%
}
\makeatother

\newtheorem{theorem}{Theorem}

\newtheorem{proposition}{Proposition}
\newtheorem{definition}{Definition}
\newtheorem{remark}{Remark}
\newtheorem{algo}{Algorithm}

\def\prob{{\rm pr}}
\def\support{{\rm supp}}
\def\vspan{{\rm span}}
\def\adj{\dag}
\def\closedBall{\overline{B}}
\def\sphere{\mathbb{S}}
\def\hsphere{{\mathcal{S}}}

\def\expectation{{E}}
\def\Real{{\mathbb{R}}}
\def\H{{\mathcal{H}}}
\def\ud{{\mathrm{d}}}
\newcommand\RpNorm[1]{\lvert #1 \rvert}
\newcommand\norm[1]{\lVert #1 \rVert}
\newcommand\inner[1]{\langle #1 \rangle}

\def\tp{{ \mathrm{\scriptscriptstyle T} }}

\begin{document}

%


\title{On the existence and non-existence of centres of mass on Hilbert spheres}

\author{Shahin Tavakoli \\ RISIS, GSEM, University of Geneva\\ \texttt{shahin.tavakoli@unige.ch}}

\maketitle

\begin{abstract}
Fréchet means and $L^p$ centres of mass provide notions of average location in metric spaces. On finite-dimensional spheres, existence follows from compactness.
 On infinite-dimensional spheres, it is not known whether a centre of mass always exists. We show that this is not always the case, and give a simple assumption under which a centre of mass exists. We then show that finding the sample centre of mass of data $x_1, \ldots, x_n$ on the sphere is always an optimisation problem on a subsphere of manifold dimension at most $n$, regardless of the potentially infinite dimension of the sphere. We conclude with some statistical implications.
\end{abstract}


\section{Introduction}

\subsection{Centres of mass}

The notion of `location' for a random quantity $X$ is a fundamental concept in statistics. For a random variable or random vector, this is typically given by the expectation. Taking an expectation assumes that $X$ lies in a vector space. However many interesting statistical problems consider data that lie in spaces that are non-Euclidean, where taking averages is not a meaningful operation.

Non-Euclidean data arise in many areas of statistics, such as statistical shape analysis \citep{kendallShapeManifoldsProcrustean1984,leLocatingFrechetMeans2001,drydenGeometryDrivenStatistics2015}, geometry-driven statistics \citep{drydenGeometryDrivenStatistics2015,pennecRiemannianGeometricStatistics2020a}, and elastic shape analysis \citep{srivastavaFunctionalShapeData2016}. These can be viewed as data on Riemannian manifolds or more generally data on metric spaces. 

In such non-Euclidean situations, a notion of location can be generalised by noticing that the expectation $\mu = \expectation(X)$ of a random vector $X \in \Real^k$ is the minimiser of the functional $F(m) = \expectation \RpNorm{m - X}^2 , m \in \Real^k$, where $\RpNorm{x} = (x^\tp x)^{1/2}$.
This leads to the notion of Fr\'echet mean on metric spaces:
\begin{definition}[\citet{Frechet1948}]
    Let $(M, d)$ be a metric space, and $X \in M$ a random element. Let $F_2(m) = \expectation( d^2(m,X) )$, $m \in M$.
    Assuming $\expectation( d^2(X,m) ) < \infty$ for some $m \in M$, the Fr\'echet mean of $X$ is
    \[
        \mu_2(X) := \arg\inf_{m \in M} F_2(m).
    \]
    More generally, the $L^p$ centres of mass \citep{afsariRiemannianCenter2011} are given by 
    \[
        \mu_p(X) := \arg\inf_{m \in M} F_p(m), \quad p \geq 1,
      \]
      where $F_p(m) = \expectation( d^p(m,X) )$. 

      The $\phi$-centres of mass, for $\phi:[0, \infty) \to \Real$, are obtained by replacing $F_p$ by $F_\phi(m) = \expectation( \phi( d(m,X) ) )$ in the last displayed equation. 
\end{definition}
In full generality $\mu_p(X)$ could be empty, a singleton, or a set with multiple elements. The class of $\phi$-centres of mass includes the Huber mean \citep{leeHuberMeansRiemannian2025} as a special case.

Fr\'echet means were introduced by \citet{Frechet1948}. \citet{karcherRiemannianCenterMass1977} considered the local minimisers of $F_2$, known as the Karcher mean, which was further studied in \citet{kendallProbabilityConvexityHarmonic1990}. \citet{afsariRiemannianCenter2011} gives general sufficient conditions for uniqueness of $L^p$ centres of mass in finite-dimensional Riemannian manifolds.

Among the different metric spaces of interest, the sphere (of finite or infinite dimensions) is of particular interest: it is (arguably) the simplest nonlinear manifold, and arises in several applications: directional data are data on spheres, pre-shape spaces are spheres, densities can be viewed as points on an infinite-dimensional sphere, and in elastic shape analysis, the pre-shape space of square-root velocity functions is an infinite-dimensional sphere \citep{srivastavaFunctionalShapeData2016}. See also \citet{drydenStatisticalAnalysisHighdimensional2005} for another approach to infinite-dimensional spherical data as a limit of finite-dimensional spherical data.

\subsection{Finite-dimensional spheres}

In the finite-dimensional sphere $\sphere^k := \{ x \in \Real^{k+1} \mid \RpNorm{x}^2 = 1 \}$, equipped with the distance $d(x,q) = \arccos(x^\tp q)$ for $x,q \in \sphere^k$, the $L^p$ centre of mass set is always non-empty, by compactness of $\sphere^k$ and continuity of the functional $F_p$. If $X \sim \nu$ has support inside a spherical cap of radius strictly smaller than $\pi/2$, i.e., $\support(\nu) \subseteq \closedBall(o, r) := \{ x \in \sphere^k \mid d(o,x) \leq r\}$ with $r < \pi/2$, the Fr\'echet mean is unique and belongs to $\closedBall(o,r)$ \citep{kendallProbabilityConvexityHarmonic1990,kendallConvexityHemisphere1991,afsariRiemannianCenter2011}. Without further distributional assumptions, the radius condition $r < \pi/2$  is sharp: for $r = \pi/2$ the distribution with mass $1/2$ at two antipodal points has an infinite number of Fréchet means.

\subsection{Infinite-dimensional spheres}

Matters become more complicated in the infinite-dimensional sphere $\hsphere := \{ x \in \H \mid \norm{x}^2=1 \}$, where $\H$ is an infinite-dimensional separable Hilbert space with inner product $\inner{\cdot, \cdot}$ and norm $\norm{\cdot} = \inner{\cdot, \cdot}^{1/2}$, the canonical example of which is the space $\H = L^2([0,1], \Real)$ of square-integrable functions $f:[0,1]\to \Real$, $\norm{f}^2 = \int f^2(x) \ud x < \infty$.

In an infinite-dimensional sphere $\hsphere$, even existence of a Fr\'echet mean is non-trivial, because $\hsphere$ is not compact. If the support of $X \in \hsphere$ lies within a spherical cap $\closedBall(o, r) := \{ x \in \hsphere \mid d_\hsphere(o,x) \leq r \}$ of radius $r < \pi/2$, where $d_\hsphere(x,x') = \arccos( \inner{x, x'} )$ is the great-arc distance on $\hsphere$, then \citet[Theorem~B and Theorem~57]{yokotaConvexFunctionsBarycenter2017} showed that the $L^p$ centre of mass ($p \geq 2$) exists, is unique, and belongs to the convex hull of the support of $X$ \citep[the proofs use techniques from][]{kendallConvexityHemisphere1991}. In particular, this implies that finding the sample Fr\'echet mean of $x_1,\ldots, x_n \in \closedBall(o,r) \subset \hsphere$ where $o \in \hsphere, r < \pi/2$, i.e., the minimisation of $F_p^{(n)}(m) = n^{-1} \sum_{i=1}^n d_\hsphere^p(m, x_i)$, is an optimisation over a manifold of dimension at most $n-1$.
The paper \citet{yokotaConvexFunctionsBarycenter2017} 
appears to have received limited attention in the statistical literature,
perhaps because of its very general setting (it considers random elements in CAT$(1)$ spaces). Another paper worth mentioning is \citet{daiStatisticalInferenceHilbert2022}, which shows existence and uniqueness of the Fr\'echet mean in the infinite-dimensional sphere if the support of the distribution has diameter at most $\pi/2$, and shows that this implies that the distribution has support within $B(\mu, \pi/2)$, the open ball of radius $\pi/2$ centred at the unique Fr\'echet mean $\mu$. 

Without the spherical cap assumption---i.e. without $\prob(X \in \closedBall(o,r) ) = 1$ for some $o \in \hsphere$ and $r < \pi/2$---it is not yet known whether a centre of mass on the infinite-dimensional sphere exists or not, and it is not known whether the problem of computing the sample $L^p$ centre of mass is an infinite-dimensional optimisation problem or reduces to a finite-dimensional one. In this paper, we show that a centre of mass does not always exist on the infinite-dimensional sphere $\hsphere$. We then provide a simple condition under which a centre of mass exists. We show that finding a sample centre of mass is always a finite-dimensional optimisation problem, and describe how its computation can be implemented as a finite-dimensional optimisation problem. All proofs are given in the appendix.

\section{Main results}

Our first result is that the $L^p$ centre of mass does not always exist on the infinite-dimensional sphere $\hsphere \subset \H$.

\begin{proposition}
    \label{prop:non-existence}
  For any $p>1$, there exists a distribution with empty $L^p$ centre of mass. 
\end{proposition}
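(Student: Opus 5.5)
The plan is to use a symmetric distribution supported on antipodal pairs of a complete orthonormal system. The symmetry makes $(\pi/2)^p$ a lower bound for $F_p$. Completeness prevents any point of $\hsphere$ from attaining this bound. A sequence escaping to infinity along the basis shows the bound is nevertheless the infimum.

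\emph{Construction.} Let $(e_k)_{k\geq 1}$ be an orthonormal basis of the separable Hilbert space $\H$, and let $w_k>0$ with $\sum_k w_k = 1$. Define $X$ by
\[
\prob(X = e_k) = \prob(X = -e_k) = w_k/2, \qquad k \geq 1 .
\]
Since $d_\hsphere \leq \pi$, the moment condition holds trivially. For $m \in \hsphere$ write $a_k = \inner{m, e_k}$, so that $\sum_k a_k^2 = 1$. Then
\[
F_p(m) = \sum_{k\geq1} \frac{w_k}{2}\bigl\{ \arccos(a_k)^p + \arccos(-a_k)^p \bigr\}.
\]

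\emph{Lower bound and non-attainment.} I will use $\arccos(-a) = \pi - \arccos(a)$. Setting $t = \arccos(a_k) \in [0,\pi]$, each bracket equals $t^p + (\pi - t)^p$. For $p>1$ this function is strictly convex and symmetric about $t=\pi/2$. It is therefore minimised uniquely at $t = \pi/2$, equivalently $a_k = 0$, with minimum value $2(\pi/2)^p$. Hence $F_p(m) \geq (\pi/2)^p$ for every $m$, with equality only if $a_k = 0$ for all $k$. Because $(e_k)$ is complete, this would force $m = 0 \notin \hsphere$. So $F_p(m) > (\pi/2)^p$ for every $m \in \hsphere$.

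\emph{Infimum equals the bound.} Take $m = e_N$. Then $a_k = \delta_{kN}$, and
\[
F_p(e_N) = (1-w_N)(\pi/2)^p + \frac{w_N}{2}\bigl(0 + \pi^p\bigr) \longrightarrow (\pi/2)^p \quad (N \to \infty),
\]
since $w_N \to 0$. Thus $\inf_{\hsphere} F_p = (\pi/2)^p$ is not attained, and the $L^p$ centre of mass is empty.

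There is no real obstacle in this argument. The only points needing care are the strict convexity of $t \mapsto t^p + (\pi-t)^p$, which is exactly where $p>1$ enters, and the use of completeness to rule out equality. For $p=1$ the bracket is constant and every point is a minimiser. The argument also shows that minimising sequences converge weakly to $0$, which is how the failure of compactness shows up.
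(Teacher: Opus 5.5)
Your proposal is correct and is essentially the paper's proof. It uses the same antipodal-pairs distribution on a complete orthonormal basis (your weights $w_k/2$ with $\sum_k w_k = 1$ are the paper's $w_n$ with $\sum_n w_n = 1/2$), the same lower bound $t^p + (\pi - t)^p \geq 2(\pi/2)^p$ with completeness ruling out equality, and the same minimising sequence $e_N$; your strict-convexity argument also justifies the uniqueness of the minimiser at $t = \pi/2$, which the paper only asserts.
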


A concrete distribution on $\hsphere$ not admitting an $L^p$ centre of mass is the following. Take $\{ u_n : n = 1,2, \ldots \} \subset \H$ a complete orthonormal sequence, and weights $w_n \in (0,1/2), n =1,2, \ldots$ summing to $1/2$. The probability distribution $\nu$ on $\hsphere$ given by $\nu = \sum_{n=1}^\infty w_n(\delta_{-u_n} + \delta_{u_n})$, where $\delta_u$ is a Dirac mass at $u$, does not admit an $L^p$ centre of mass for any $p > 1$ (see the proof of Proposition~\ref{prop:non-existence}).
One can also construct counter-examples that are not discrete, by replacing $(\delta_{-u_n} + \delta_{u_n})$ with uniform distributions on finite-dimensional and orthogonal sub-spheres of $\hsphere$.

The idea underlying this example is that the centre of mass wants to `escape' each subspace $\vspan(u_n)$. This pushes the centre of mass to be orthogonal to each $u_n$, which is impossible. The next result shows that if we allow for a `dimension to escape to', a centre of mass always exists.
\begin{theorem}
\label{thm:existence}
Let $\nu$ be a probability measure on $\hsphere$ 
and let $U = \overline \vspan\{ \support(\nu) \}$. 

If $\dim\{ U^\perp \} \geq 1$, then for any continuous function $\phi:[0,\pi] \to \Real$, there exists a $s^* \in \hsphere$ such that
\[
    F_\phi(s^*) = \inf_{s \in \hsphere} F_\phi(s),
\]
where $F_\phi(s) = \int_\hsphere \phi\{d_\hsphere(s,x)\} \ud \nu(x)$.
Moreover, if such an $s^*$ is unique, then $s^* \in U \cap \hsphere$.
In particular, an $L^p$ centre of mass of $\nu$ exists for any $p > 0$. 

Additionally, the result holds for $L^\infty$ centres of mass, that is, minimisers of 
\[
  F_\infty(s) = \sup_{ x \in \overline{\support(\nu)} } d_\hsphere(s,x), \quad s \in \hsphere.
\]
\end{theorem}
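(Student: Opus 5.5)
Fix a unit vector $e \in U^\perp$, which exists because $\dim\{U^\perp\}\geq 1$. Every $s\in\hsphere$ decomposes uniquely as $s = a + b$ with $a \in U$ and $b \in U^\perp$, $\norm{a}^2+\norm{b}^2=1$. For $x \in \support(\nu)\subseteq U$ we have $\inner{s,x} = \inner{a,x}$, so $d_\hsphere(s,x)$ depends on $s$ only through $a$. Hence $F_\phi(s) = G(a)$, where $G(a) := \int \phi\{\arccos\inner{a,x}\}\,\ud\nu(x)$ is defined on the closed unit ball $\closedBall_U := \{a\in U : \norm{a}\leq 1\}$. The point of the extra dimension is the converse: every $a \in \closedBall_U$ is realised as $a + (1-\norm{a}^2)^{1/2} e \in \hsphere$. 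Thus $\inf_{\hsphere} F_\phi = \inf_{\closedBall_U} G$, and it suffices to show that $G$ attains its infimum on $\closedBall_U$.

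For attainment I use the weak topology. The ball $\closedBall_U$ is weakly compact, and it is weakly sequentially compact since $\H$ is separable and Hilbert. The map $a\mapsto \inner{a,x}$ is weakly continuous for each fixed $x$, and $\phi\circ\arccos$ is continuous on $[-1,1]$, hence bounded. So if $a_k \rightharpoonup a$, then $\phi\{\arccos\inner{a_k,x}\}\to\phi\{\arccos\inner{a,x}\}$ pointwise in $x$, and dominated convergence (the integrand is bounded and $\nu$ is a probability measure) gives $G(a_k)\to G(a)$. Thus $G$ is weakly sequentially continuous on a weakly sequentially compact set. Taking a minimising sequence and extracting a weakly convergent subsequence produces a minimiser $a^*$, and $s^* = a^*+(1-\norm{a^*}^2)^{1/2}e$ minimises $F_\phi$. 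This is the key step. The counterexample of Proposition~\ref{prop:non-existence} fails exactly because weak limits of unit vectors leave the sphere, and the spare direction $e$ is what lets us return to $\hsphere$. The $L^p$ case follows from $\phi(t)=t^p$, which is continuous on $[0,\pi]$ for $p>0$.

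For uniqueness, suppose $s^*=a^*+b^*$ is the unique minimiser. If $\norm{a^*}<1$, then $b^*\neq 0$. Any unit vector $b'\in U^\perp$ with $b'\neq b^*/\norm{b^*}$ would give a second minimiser $a^*+\norm{b^*}b'$. Such a $b'$ exists when $\dim U^\perp\geq 2$, and also when $\dim U^\perp = 1$ by taking $b' = -b^*/\norm{b^*}$. This contradicts uniqueness, so $b^*=0$ and $s^*\in U\cap\hsphere$.

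For $F_\infty$, define $G_\infty(a)=\sup_{x\in\overline{\support(\nu)}}\arccos\inner{a,x}$. It is a supremum of weakly continuous functions, hence weakly lower semicontinuous. A weakly lower semicontinuous function attains its minimum on a weakly compact set, so the same realisation and symmetry arguments apply. I expect the main obstacle to be the weak continuity argument in the second paragraph, specifically ensuring domination. Continuity of $\phi$ on the compact interval $[0,\pi]$ settles it, which explains why the theorem assumes $\phi$ is continuous on $[0,\pi]$.
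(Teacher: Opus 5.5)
Your proposal is correct and follows essentially the same route as the paper. You project onto $U$ and use a unit vector in $U^\perp$ to identify $\inf_{\hsphere}F_\phi$ with the infimum over the closed unit ball of $U$. You then combine weak compactness with dominated convergence (weak lower semicontinuity for $F_\infty$), and get the uniqueness claim from the reflection $b^*\mapsto -b^*$ in $U^\perp$. The one step you leave implicit is $\nu\{\support(\nu)\}=1$; this holds because $\H$ is separable, so $\inner{s,x}=\inner{a,x}$ for $\nu$-almost every $x$, as the paper notes.
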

A simple application of Theorem~\ref{thm:existence} is to the sample $L^p$ centre of mass of $x_1,\ldots, x_n \in \hsphere$, i.e. the minimiser of $F_p^{(n)}(s) = n^{-1} \sum_{i=1}^n d^p_\hsphere(s, x_i)$. If the minimiser is unique, it belongs to $\vspan(x_1,\ldots, x_n) \cap \hsphere$, and hence finding the $L^p$ centres of mass is an optimisation over a sphere of manifold dimension at most $n-1$.  This result extends \citet[][Theorem 57]{yokotaConvexFunctionsBarycenter2017} to samples not lying within a spherical cap $\closedBall(o, r)$ with radius $r<\pi/2$, but assumes the centre of mass is unique. The following result is an extension to non-unique centres of mass.
\begin{theorem}
    \label{thm:sample}
    Let $x_1,\ldots, x_n \in \hsphere$, and pick any $u \in \vspan(x_1,\ldots, x_n)^\perp \cap \hsphere$.
For any continuous function $\phi:[0,\pi] \to \Real$, a minimiser $\hat s$ of 
\[
  F_\phi^{(n)}(s) =  n^{-1} \sum_{i=1}^n \phi\{d_\hsphere(s,x_i)\}, \quad s \in \hsphere,
\]
exists and can be found within the finite-dimensional subsphere $\vspan(u, x_1,\ldots, x_n) \cap \hsphere$. 
In particular, finding a sample $L^p$ centre of mass for $p>0$ is an optimisation over a manifold of dimension at most $n$.
Furthermore, if $\inner{\hat s, u} \neq 0$ for a minimiser $\hat s \in \hsphere$ of $F^{(n)}_\phi$, then $F^{(n)}_\phi$ has multiple minimisers.  
\end{theorem}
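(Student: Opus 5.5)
The plan is a projection argument: every point of $\hsphere$ can be replaced, without changing $F^{(n)}_\phi$, by a point of the finite-dimensional subsphere $V\cap\hsphere$, where $V=\vspan(u,x_1,\ldots,x_n)$. Existence then follows from compactness.

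\emph{Reduction to $V$.} Set $W=\vspan(x_1,\ldots,x_n)$, so that $V=W\oplus\vspan(u)$ with $u\perp W$. Take any $s\in\hsphere$ and write $s=w+r$ with $w\in W$ and $r\in W^\perp$, so that $\norm{w}^2+\norm{r}^2=1$. Define
\[
  s' = w + \norm{r}\,u \in V\cap\hsphere .
\]
Then $\norm{s'}^2=\norm{w}^2+\norm{r}^2=1$. Since $u\perp x_i$ and $r\perp x_i$, we get $\inner{s',x_i}=\inner{w,x_i}=\inner{s,x_i}$ for every $i$. Hence $d_\hsphere(s',x_i)=d_\hsphere(s,x_i)$ and $F^{(n)}_\phi(s')=F^{(n)}_\phi(s)$. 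It follows that
\[
  \inf_{\hsphere}F^{(n)}_\phi=\inf_{V\cap\hsphere}F^{(n)}_\phi .
\]

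\emph{Existence.} The set $V\cap\hsphere$ is a sphere of dimension $\dim V-1\le n$, so it is compact. The map $s\mapsto d_\hsphere(s,x_i)=\arccos\inner{s,x_i}$ is continuous, and $\phi$ is continuous on $[0,\pi]$. Therefore $F^{(n)}_\phi$ attains its minimum on $V\cap\hsphere$ at some $\hat s$. By the reduction above, $\hat s$ is a global minimiser on $\hsphere$. Taking $\phi(t)=t^p$ gives the $L^p$ statement, as an optimisation over a manifold of dimension at most $n$.

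\emph{Multiplicity.} Suppose $\hat s$ is a minimiser with $\inner{\hat s,u}\neq 0$. Decompose $\hat s=w+r$ as before; then $r\ne0$ because $\inner{r,u}=\inner{\hat s,u}\neq0$. The reflection $R(s)=s-2\inner{s,u}u$ is an isometry of $\hsphere$ that fixes each $x_i$, since $\inner{x_i,u}=0$. Consequently $F^{(n)}_\phi(R\hat s)=F^{(n)}_\phi(\hat s)$, so $R\hat s$ is also a minimiser. Moreover $R\hat s\ne\hat s$, since the two differ by $2\inner{\hat s,u}u\ne0$. Hence there are multiple minimisers.

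The main delicate point is the reduction step, namely checking that $s'$ preserves every inner product with the $x_i$ while staying on the unit sphere. Everything else is routine compactness and symmetry. Only $u\perp W$ is used, which is guaranteed because $\H$ is infinite-dimensional, so such a $u$ exists.
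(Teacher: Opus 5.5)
Your proof is correct and follows the paper's argument; the main difference is how you get existence. The reduction step is the paper's argument made explicit. The paper only asserts that there is an orthogonal operator $R$ fixing the $x_i$ with $Rs\in\vspan(u,x_1,\ldots,x_n)\cap\hsphere$. Your $s'=w+\norm{r}u$ is exactly the image of $s$ under an orthogonal map that fixes $W$ and sends $r/\norm{r}$ to $u$. It is also the same lift $v\mapsto v+(1-\norm{v}^2)^{1/2}u$ used in the proof of Theorem~\ref{thm:existence}. Your reflection in $u$ for the multiplicity claim is identical to the paper's.

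Where you depart is existence. The paper gets it from Theorem~\ref{thm:existence}, using weak compactness of the unit ball of $\vspan(x_1,\ldots,x_n)$ and dominated convergence. It then shows separately that the infimum may be taken over the subsphere. You do the reduction first and then use ordinary compactness of the finite-dimensional sphere $V\cap\hsphere$. This ordering is more elementary and self-contained, and it gives a minimiser lying in $V\cap\hsphere$ directly. The paper instead has to combine a minimiser found in $\hsphere$ with the reduction map to land in the subsphere. The paper's route simply reuses the general existence theorem, which it needs anyway for non-discrete $\nu$.
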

The results of Theorem~\ref{thm:sample} also hold for the finite-dimensional sphere $\sphere^k$, $k \geq 1$.

\section{Statistical Implications}

Any implementation of the search for sample Fréchet means for infinite-dimensional data $x_1,\ldots, x_n \in \hsphere$ requires some form of discretisation. If $\hsphere$ is the sphere in $L^2([0,1], \Real)$ then some popular approaches are to evaluate each data point on a fine grid of values $t_1,\ldots, t_m \in [0,1]$ to obtain finite-dimensional representations $c_i = (x_i(t_1), \ldots, x_i(t_m) ) \in \Real^m$, or to use finite basis expansions to represent $x_i$ by a vector $c_i = (c_{i1},\ldots, c_{im}) \in \Real^m$ for which $x_i(t) \approx c_{i1} \psi_1(t) + \ldots + c_{im} \psi_m(t), \; t \in [0,1]$. If the data are complex (e.g., the $x_i$ are densities that exhibit rapid changes), then large values of $m$ could be needed. Theorem~\ref{thm:sample} implies that the search for sample Fréchet means is an optimisation over a subsphere of manifold dimension at most $n$, which can be advantageous if $m \gg n$. Algorithm~\ref{algo:computation_centre_mass} gives the details to achieve this.

\begin{algo} \label{algo:computation_centre_mass}
Take an arbitrary $u \in \hsphere \cap \vspan(x_1,\ldots, x_n)^\perp$.
  \begin{tabbing}
    \qquad Compute Gram matrix $G \in \Real^{n \times n}$ with $(G)_{ij} = x_i^\tp x_j$\\
    \qquad  Compute eigendecomposition $G = U \Lambda U^\tp$\\
    \qquad  Truncate $U \in \Real^{n \times r}$ and $\Lambda \in \Real^{r\times r}$ where $r = $ number of non-zero eigenvalues of $G$ \\
    \qquad For $i=1,\ldots, n$ set $y_i \in \Real^{r+1} \leftarrow$ $i$-th row of $U \Lambda^{1/2}$ appended with $0$ \\
    \qquad  Apply a suitable optimisation method \citep[e.g.,][]{afsariGradient} on $\{y_i\}_{i=1}^n$.\\
    \qquad \qquad Start algorithm from $v = (0,\ldots, 0, 1)^\tp \in \Real^{r+1} $\\
    \qquad \qquad Retain a global minimiser $\tilde m$, the sample $\phi$-centre of mass of $\{y_i\}_{i=1}^n$\\
    \qquad Find any $\tilde \alpha_1,\ldots, \tilde \alpha_n,\tilde  \beta \in \Real$ such that $\tilde m = \sum_{j=1}^n \tilde \alpha_j y_j + \tilde \beta v$ (e.g., using ordinary least squares) \\
    \qquad  \enspace Output: $\phi$-centre of mass $\hat m = \sum_{j=1}^n \tilde \alpha_j x_j + \tilde \beta u$  of $\{x_i\}_{i=1}^n$ \\
  \end{tabbing}
\end{algo}

In Algorithm~\ref{algo:computation_centre_mass}, the iterative algorithm for computing the centre of mass of the reduced data $\{y_i\}_{i=1}^n$ should be started at the vector $v = (0,\ldots, 0, 1)^\tp \in \Real^{r+1}$ to make sure that the algorithm does not  get stuck in $\vspan(y_1,\ldots,y_n)$.

The following result justifies Algorithm~\ref{algo:computation_centre_mass}. 
\begin{proposition}
\label{proposition:gram_to_solution}
The $\hat m = \sum_{j=1}^n \tilde \alpha_j x_j + \tilde \beta u$ obtained in Algorithm~\ref{algo:computation_centre_mass} is the $\phi$-centre of mass of $\{x_i\}_{i=1}^n$.
\end{proposition}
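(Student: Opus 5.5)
The plan is to build an explicit linear isometry between the finite-dimensional space $\Real^{r+1}$ and the subspace $W := \vspan(u, x_1,\ldots,x_n) \subset \H$ that sends $y_i$ to $x_i$ and $v$ to $u$. Minimisers on the two sides then correspond, and Theorem~\ref{thm:sample} lets us pass from minimisers on $W \cap \hsphere$ to minimisers on all of $\hsphere$.

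\textbf{Step 1: the reduced data reproduce the geometry.} Let $Y \in \Real^{n \times r}$ have rows given by the first $r$ coordinates of $y_i$, so $Y = U\Lambda^{1/2}$. Then $YY^\tp = U\Lambda U^\tp = G$, because the discarded eigenvalues are zero. Hence $y_i^\tp y_j = \inner{x_i, x_j}$ for all $i,j$. Moreover $y_i^\tp v = 0 = \inner{x_i, u}$ and $v^\tp v = 1 = \norm{u}^2$. So the Gram matrix of $(y_1,\ldots,y_n,v)$ equals the Gram matrix of $(x_1,\ldots,x_n,u)$.

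\textbf{Step 2: the isometry $T$.} Define $T:\vspan(y_1,\ldots,y_n,v) \to W$ by
\[
T\Bigl(\sum_j \alpha_j y_j + \beta v\Bigr) = \sum_j \alpha_j x_j + \beta u .
\]
Equal Gram matrices give $\norm{\sum_j \alpha_j x_j + \beta u}^2 = \RpNorm{\sum_j \alpha_j y_j + \beta v}^2$ for all coefficients. This single identity settles two things at once.
\begin{itemize}
\item $T$ is well defined: if two coefficient vectors give the same element on the $y$-side, their difference has zero norm there, hence zero norm on the $x$-side.
\item $T$ is a linear isometry.
\end{itemize}
The rank of $(y_1,\ldots,y_n)$ is $r$ and the $y_i$ are orthogonal to $v$, so $\vspan(y_1,\ldots,y_n,v) = \Real^{r+1}$. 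Both spaces have dimension $r+1$, so $T:\Real^{r+1} \to W$ is a bijective isometry. It maps the unit sphere $\sphere^r$ onto $W \cap \hsphere$ and preserves inner products, hence preserves $d_\hsphere$. Consequently
\[
F^{(n)}_\phi(Ts) = n^{-1}\sum_i \phi\{\arccos \inner{Ts, Tx}\}\ \text{evaluated at } x = y_i, \quad\text{i.e.}\quad F^{(n)}_\phi(Ts) = \tilde F(s),
\]
where $\tilde F(s) = n^{-1}\sum_i \phi\{\arccos(s^\tp y_i)\}$ is the reduced objective on $\sphere^r$. Since $\tilde m = \sum_j \tilde\alpha_j y_j + \tilde\beta v$, we get $\hat m = T\tilde m$, regardless of which decomposition least squares returns (this is well-definedness again).

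\textbf{Step 3: from the subsphere to all of $\hsphere$.} Because $T$ is a bijection, $\hat m$ minimises $F^{(n)}_\phi$ over $W \cap \hsphere$. By Theorem~\ref{thm:sample}, applied with this same $u$, some global minimiser of $F^{(n)}_\phi$ over $\hsphere$ lies in $W \cap \hsphere$. Therefore
\[
\inf_{W\cap\hsphere} F^{(n)}_\phi = \inf_{\hsphere} F^{(n)}_\phi ,
\]
and $\hat m$ is a global $\phi$-centre of mass.

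The main obstacle is the bookkeeping in Step 2. One must check that the map is well defined despite non-unique coefficients, which happens when the $x_i$ are linearly dependent. One must also check that truncating to the $r$ nonzero eigenvalues loses nothing. Both points reduce to the norm identity obtained from equal Gram matrices.
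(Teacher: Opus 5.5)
Your proof is correct and takes essentially the same route as the paper. Both rest on the equality of the Gram matrices of $(y_1,\ldots,y_n,v)$ and $(x_1,\ldots,x_n,u)$: the paper uses it coefficient by coefficient in a direct chain of inequalities, while you package it as an explicit linear isometry $T$. Your version usefully makes explicit two points the paper leaves implicit. First, each $\sum_j \alpha_j y_j + \beta v$ coming from a point of $W\cap\hsphere$ lies on the unit sphere of $\Real^{r+1}$. Second, it is Theorem~\ref{thm:sample} that upgrades minimality over $W\cap\hsphere$ to minimality over all of $\hsphere$.
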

\begin{remark}
  A simple indicator of non-uniqueness of the sample $\phi$-centre of mass is to consider the $\beta$ in Algorithm~\ref{algo:computation_centre_mass}. If $\beta \neq 0$, Theorem~\ref{thm:sample} implies that the sample $\phi$-centre of mass is not unique.
\end{remark}

\section{Discussion}

The questions of strong consistency and asymptotic distributions of the sample centres of mass on the infinite-dimensional sphere without spherical cap assumptions have not been considered here. \citet{evansLimitTheoremsFrechet2024} shows weak convergence of the centres of mass in separable metric spaces, and provides Kuratowski upper limit inclusion of the sample centres of mass sets \citep[see also][for the earliest such result]{ziezoldExpectedFiguresStrong1977}. \citet{daiStatisticalInferenceHilbert2022} provides a strong consistency result and asymptotic distribution for the sample Fréchet mean under bounded diameter assumptions. \citet{jaffeFrechetMeansInfinite2026} considers asymptotic theory for centres of mass in infinite-dimensional metric spaces, but its setup does not apply to the infinite-dimensional sphere.

\section*{Acknowledgement}

The author thanks Almond Stoecker, Victor-Emmanuel Brunel, Adam Jaffe, Yoav Zemel, Victor Panaretos and Stephan Huckemann for helpful discussions. 

\appendix

\section*{Appendix: Proofs of the main results}

  \begin{proof}[of Proposition~\ref{prop:non-existence}]
    Let $p>1$, take $\{ u_n : n = 1,2, \ldots \} \subset \H$ a complete orthonormal sequence, and weights $w_n \in (0,1/2), n =1,2, \ldots$ summing to $1/2$. Define the probability measure $\nu$ on $\hsphere$ by $\nu = \sum_{n=1}^\infty w_n(\delta_{-u_n} + \delta_{u_n})$, where $\delta_u$ is a Dirac mass at $u$. For any $s \in \hsphere$, let $a_n = \inner{s, u_n} \in [-1,1]$ and $\alpha_n = \arccos(a_n) \in [0, \pi]$. Using $\arccos(-t) = \pi - \arccos(t)$, we have 
    \begin{align*}
        F_p(s) &= \int_\hsphere d_\hsphere^p(s,x) \ud \nu(x) 
             \\&= \sum_{n=1}^\infty w_n( \arccos^p( \inner{s, -u_n} ) +  \arccos^p( \inner{s, u_n} ) )
             \\&= \sum_{n=1}^\infty w_n( \arccos^p( -a_n ) +  \arccos^p( a_n ) )
             \\&= \sum_{n=1}^\infty w_n( (\pi - \alpha_n)^p +  \alpha_n^p )
             \\&\geq \sum_{n=1}^\infty 2 w_n (\pi/2)^p
             \\&= (\pi/2)^p,
    \end{align*}
    where we have used the fact that the minimum of $t \mapsto (\pi - t)^p + t^p$ is obtained at $t=\pi/2$ for any $p > 1$. Taking the sequence $s_m = u_m$, we get
    \[
        F_p(s_m) = \sum_{n=1}^\infty w_n( (\pi - \pi/2)^p +  (\pi/2)^p ) - 2w_m (\pi/2)^p + w_m \pi^p  \to (\pi/2)^p, \quad \text{as } m \to \infty
    \]
    Therefore $\inf_{s \in \hsphere} F_p(s) = (\pi/2)^p$, and $F_p(s) = (\pi/2)^p$ if and only if $\inner{s, u_n} = 0$ for all $n$. Since $\{u_n\}$ is complete, this implies  $s = 0$, which does not belong to $\hsphere$.
    This shows that the $L^p$ centres of mass, $p>1$, do not exist in this case.
\end{proof}

\begin{proof}[of Theorem~\ref{thm:existence}]
    Let $U = \overline \vspan\{ \support(\nu) \}$. 
    We extend the domain of $F_\phi$ and $d_\hsphere(\cdot, x)$ to the closed unit ball in $\H$ using the formula $d_\hsphere(u, x) = \arccos \inner{u,x}$ which is valid for any $x \in \hsphere$ and $u \in \H, \norm{u} \leq 1$.
    Any $s \in \hsphere$ can be uniquely decomposed as $s = u + u^\perp$ where $u \in U$, $u^\perp \in U^\perp$. Denote $u = Ps$ where $P: \H \to \H$ is the orthogonal projection onto $U$.
    For $\nu$-almost every $x$, $\inner{s,x} = \inner{Ps,x}$ and thus $\phi\{ d_\hsphere(s,x) \} = \phi\{ d_\hsphere(Ps,x) \}$. 
    Notice that $\norm{Ps} \leq \norm{s}=1$, hence $Ps \in B$, where $B = \{ v \in U \mid \norm{v} \leq 1 \}$ is the closed unit ball in $U$. 
    Furthermore, for any $u \in B$, there exists an $s \in \hsphere$ such that $Ps = u$. Indeed, take $u^\perp \in U^\perp \cap \hsphere$ (which exists by assumption), and set $s = u + (1- \norm{u}^2)^{1/2} u^\perp$.
    This gives 
    \[
        \inf_{s \in \hsphere} F_\phi(s) = \inf_{u \in B} F_\phi(u).
    \]
    Take $\{ u_n\} \subset B$ a sequence minimising $F_\phi$ over $B$, i.e., $\inf_{u \in B} F_\phi(u) = \lim_{n \to \infty} F_\phi(u_n)$. By weak compactness of $B$, we can extract a subsequence $\{u_n\}$ (by slight abuse of notation) that converges weakly to $u^* \in B$. Therefore, for any $x \in \hsphere$, $\inner{u_n, x} \to \inner{u^*, x}$ as $n \to \infty$. Since $|\inner{u_n, x}| \leq 1$, $\phi( d_\hsphere(u_n, x) )$ is well-defined, and since $\phi \circ \arccos$ is continuous, 
    \[
        \phi(d_\hsphere(u_n, x))  = \phi( \arccos( \inner{u_n, x} )) \to \phi(d_\hsphere(u^*, x) ), \quad  \text{as } n \to \infty.
    \]
    Furthermore, $\phi$ being continuous on $[0,\pi]$, 
    \[
        |\phi(d_\hsphere(u_n,x))| = |\phi( \arccos( \inner{u_n, x} ) )| \leq \sup_{t \in [0, \pi]} |\phi(t)| =:M < \infty,
    \]
    thus the dominated convergence theorem yields
    \[
        \inf_{u \in B} F_\phi(u) = \lim_{n \to \infty} F_\phi(u_n) = \lim_{n \to \infty} \int_\hsphere \phi( d_\hsphere(u_n, x) ) \ud \nu(x) = \int_\hsphere \phi( d_\hsphere(u^*, x) ) \ud \nu(x) =F_\phi(u^*). 
    \]
    Now take $s^* = u^* + (1- \norm{u^*}^2)^{1/2} u^\perp$ or $s^* = u^* - (1- \norm{u^*}^2)^{1/2} u^\perp$, for any $u^\perp \in U^\perp \cap \hsphere$. We have $s^* \in \hsphere$, and $s^*$ minimises $F_\phi$ since $F_\phi(s^*) = F_\phi(u^*)$.
    Noticing that the function $t \mapsto t^p$ is continuous on $[0,\pi]$ for any $p > 0$, the $L^p$ centre of mass statement follows. 

We now turn to the proof for $L^\infty$ centres of mass.
We extend $F_\infty$ to $B$ by
\[
  F_\infty(u) = \sup_{x \in \overline{\support(\nu)}} \arccos\inner{u,x}, \qquad u \in B. 
\]
Following the same steps as above,
    \[
        \inf_{s \in \hsphere} F_\infty(s) = \inf_{u \in B} F_\infty(u).
    \]
    Take $\{ u_n\} \subset B$ a sequence minimising $F_\infty$ over $B$, i.e.,
    \[
      F_\infty(u_n) \longrightarrow \inf_{u\in B}F_\infty(u).
    \]
    By weak compactness of $B$, we can extract a subsequence $\{u_n\}$ (by slight abuse of notation) that converges weakly to $u^* \in B$.
For every fixed $x \in \overline{\support(\nu)}$, the map
\[
  u \mapsto d_\hsphere(u,x)=\arccos\inner{u,x}
\]
is weakly continuous on $B$, since $u \mapsto \inner{u,x}$ is weakly continuous and $\arccos$ is continuous on $[-1,1]$.
    The map $F_\infty$ is weakly lower semicontinuous  on $B$ (as the supremum of weakly continuous maps), hence
    \[
      F_\infty(u^*) \leq \liminf_{n\to\infty} F_\infty(u_n) = \inf_{u \in B} F_\infty(u).
    \]
    so $u^*$ achieves the infimum of $F_\infty$ over $B$.
    Now take $s^* = u^* + (1- \norm{u^*}^2)^{1/2} u^\perp$ or $s^* = u^* - (1- \norm{u^*}^2)^{1/2} u^\perp$, for any $u^\perp \in U^\perp \cap \hsphere$. We have $s^* \in \hsphere$, and $s^*$ minimises $F_\infty$ since $F_\infty(s^*) = F_\infty(u^*)$.

In both cases above, notice that if $s^*$ is unique, $\norm{u^*} = 1$ and hence $u^* \in \hsphere$.
\end{proof}

\begin{proof}[of Theorem~\ref{thm:sample}]
    The existence follows from Theorem~\ref{thm:existence} since $\dim( \vspan(x_1,\ldots, x_n) ) \leq n$.

    We now turn to the second claim. Let $U = \vspan(u, x_1,\ldots, x_n)$. For any fixed $s \in \hsphere$, we can construct an orthogonal operator $R: \H \to \H$, $R R^\adj = I$, where $R^\adj$ denotes the adjoint operator of $R$ and $I$ is the identity operator on $\H$, such that $R x_i = x_i$ for $i=1,\ldots, n$ and $Rs \in U \cap \hsphere$.
    Since $ \phi( d_\hsphere(s, x_i) ) = \phi( d_\hsphere(Rs, Rx_i) )  = \phi( d_\hsphere(Rs, x_i) )$ for $i=1,\ldots,n$, $F^{(n)}_\phi(s) = F^{(n)}_\phi(Rs)$. Noting that this holds for any $s \in \hsphere$ (but with different operators $R$ for each $s$),
    \[
        \inf_{s \in \hsphere} F^{(n)}_\phi(s) = \inf_{u \in U \cap \hsphere} F^{(n)}_\phi(u).
    \]
    For the last claim, let $\hat s \in \hsphere$ be a minimiser of $F^{(n)}_\phi$ with $\inner{\hat s, u} \neq 0$. Taking the orthogonal operator $R: \H \to \H$ that fixes the data $x_1,\ldots, x_n$ but maps $u$ to $-u$ yields a distinct minimiser $R \hat s$ of $F^{(n)}_\phi$.
    This completes the proof.
\end{proof}

\begin{proof}[of Proposition~\ref{proposition:gram_to_solution}]
  We use the notation introduced in Algorithm~\ref{algo:computation_centre_mass}.
  Since any $m \in \vspan(x_1,\ldots,x_n, u) \cap \hsphere$ can be written $m = \sum_{j=1}^n \alpha_j x_j  + \beta u$ for some coefficients $\alpha_1,\ldots, \alpha_n, \beta \in \Real$, we have
  \begin{align*}
    F_\phi^{(n)}(m) &= n^{-1} \sum_{i=1}^n \phi\left\{ d_\hsphere(m, x_i) \right\}  \\
                    &= n^{-1} \sum_{i=1}^n \phi\left\{ \arccos[ \inner{ m, x_i} ] \right\} \\ 
                    &= n^{-1} \sum_{i=1}^n \phi\left\{ \arccos\left[ \sum_{j=1}^n \alpha_j \inner{x_j, x_i} + \beta\inner{u, x_i} \right] \right\}  
                    \intertext{since $\inner{u, x_i}=0$,}
                    &= n^{-1} \sum_{i=1}^n \phi\left\{ \arccos\left[ \sum_{j=1}^n \alpha_j \inner{x_j, x_i} \right] \right\}  \\
                    &= n^{-1} \sum_{i=1}^n \phi\left\{ \arccos\left[ \sum_{j=1}^n \alpha_j y_j^\tp y_i  \right]\right\}  
                    \intertext{and since $\tilde m$ is the $\phi$-centre of mass of $y_1,\ldots, y_n$, and $v^\tp y_i = 0$ for all $i=1,\ldots, n$,}
                    &\geq  n^{-1} \sum_{i=1}^n \phi\left\{ \arccos\left[ \sum_{j=1}^n \tilde \alpha_j y_j^\tp y_i  \right] \right\} \\
                    &=  n^{-1} \sum_{i=1}^n \phi\left\{ \arccos\left[ \sum_{j=1}^n\tilde  \alpha_j \inner{x_j, x_i}  \right]   \right\}
                    \intertext{using the fact that $\inner{u, x_i}=0$,}
                    &=  n^{-1} \sum_{i=1}^n \phi\left\{ \arccos[ \inner{\hat m, x_i} ] \right\} \\ 
                    &= F_\phi^{(n)}(\hat m).
  \end{align*}
  Notice that $\hat m$ belongs to $\hsphere$ since 
  \[
    \norm{\hat m}^2 = \sum_{j,j'=1}^n \tilde \alpha_j \tilde \alpha_{j'} \inner{x_j, x_{j'}} + \tilde \beta^2  = \sum_{j,j'=1}^n \tilde \alpha_j \tilde \alpha_{j'} y_j^\tp y_{j'} + \tilde \beta^2  = \norm{\tilde m}^2  = 1 .
  \]
\end{proof}

\bibliographystyle{plainnat}
\bibliography{frechet}

\end{document}